\newtheorem{theorem}{Theorem}[section]
\newtheorem{proposition}[theorem]{Proposition}
\newtheorem{example}[theorem]{Example}
\newtheorem{corollary}[theorem]{Corollary}
\newtheorem{definition}[theorem]{Definition}
\newcommand{\co}{\text{\rm co}}
\DeclareMathOperator{\convo}{\xrightarrow[]{o}}
\DeclareMathOperator{\convco}{\xrightarrow[]{c-o}}
\DeclareMathOperator{\convr}{\xrightarrow[]{ru}}
\DeclareMathOperator{\convcr}{\xrightarrow[]{c-ru}}
\DeclareMathOperator{\convn}{\xrightarrow[]{\|\cdot\|}}
\renewcommand{\subsection}{\@startsection{subsection}{1}
{0pt}{3.25ex plus 1ex minus.2ex}{-1em}{\normalfont\normalsize\bf}}
\begin{document}

\title{{\bf Collective order boundedness of sets of operators between ordered vector spaces}}
\maketitle
\author{\centering{{Eduard Emelyanov$^{1}$, Nazife Erkur\c{s}un-\"{O}zcan$^{2}$, Svetlana Gorokhova $^{3}$\\ 
\small $1$ Sobolev Institute of Mathematics, Novosibirsk, Russia\\ 
\small $2$ Hacettepe University, Ankara, Turkey\\
\small $3$ Southern Mathematical Institute, Vladikavkaz, Russia}
\abstract{It is proved that: each collectively order continuous set of operators from an Archimedean ordered vector space with 
a generating cone to an ordered vector space is collectively order bounded; and each collectively order to norm bounded set of operators 
from an ordered Banach space with a closed generating cone to a normed space is norm bounded.
Several applications to commutative operator semigroups on ordered vector spaces are given.}

\vspace{5mm}
{\bf Keywords:} Ordered vector space, collectively qualified set of operators, commutative operator semigroup

\vspace{5mm}
{\bf MSC2020:} {\normalsize 46A40, 46B42, 47B60, 47D03}
}}

\large

\section{Introduction}

\hspace{4mm}
Collectively compact sets of operators between normed
spaces were introduced by Anselone and Palmer \cite{AP1968}.
Recently, collectively qualified sets of operators were investigated
in \cite{AEG2024,E0-2024,E1-2024,E2-2024,G2024}. In the present paper, we generalize 
some results of \cite{AS2005,E1996,E0-2024,TT2020} to collectively qualified sets in the 
ordered vector space setting. Results of the paper
are new even in the single operator case. For example, they imply that:
1) each order continuous operator on an Archimedean ordered vector space with a generating cone 
is order bounded; 2) each order to norm bounded operator on an ordered Banach space 
with a closed generating cone is continuous.

Throughout the text, vector spaces are real, operators are linear,
${\cal L}(X,Y)$ stands for the space of operators from a vector space $X$ to a vector space $Y$, 
and $x_\alpha\downarrow 0$ stands for a decreasing net in an ordered vector space such that $\inf\limits_\alpha x_\alpha=0$. 

Using of order convergence is common in study of collective properties in ordered vector spaces.
A net $(x_\alpha)$ in an ordered vector space $X$ is
\begin{enumerate}[-]
\item\ 
{\em order convergent} to $x\in X$ 
(briefly, o-convergent to $x$, or $x_\alpha\convo x$) whenever there exists 
a net $g_\beta\downarrow 0$ in $X$
such that, for each $\beta$ there is  $\alpha_\beta$ 
satisfying $-g_\beta\le x_\alpha-x\le g_\beta$ for all $\alpha\ge\alpha_\beta$. 
\item\ 
{\em relative uniform convergent} to $x\in X$ 
(briefly, ru-convergent to $x$, or $x_\alpha\convr x$) 
if, for some $u\in X_+$, there exists an increasing sequence 
$(\alpha_n)$ of indices satisfying $-\frac{1}{n}u\le x_\alpha-x\le\frac{1}{n}u$ for
all $\alpha\ge\alpha_n$. 
\end{enumerate}

We shall use the following version of \cite[Definition 1.2]{E0-2024}.
Let $X$ be an ordered vector space, $B$ a set, $A$ a directed set, and
${\cal B}=\{(x^b_\alpha)_{\alpha\in A}\}_{b\in B}$ a set of nets in $X$
indexed by $A$. We say that ${\cal B}$ 
\begin{enumerate}[-]
\item\ 
{\em collective \text{\rm o}-converges} to $0$ 
(briefly, ${\cal B}\convco 0$) 
if there exists a net $g_\beta\downarrow 0$ such that, for each $\beta$, 
there is $\alpha_\beta$ satisfying $\pm x^b_\alpha\le g_\beta$ for
$\alpha\ge\alpha_\beta$ and $b\in B$. 
\item\ 
{\em collective \text{\rm ru}-converges} to $0$ 
(briefly, ${\cal B}\convcr 0$)
if, for some $u\in X_+$, there exists an increasing sequence 
$(\alpha_n)$ of indices with $\pm x^b_\alpha\le\frac{1}{n}u$ for
$\alpha\ge\alpha_n$ and $b\in B$. 
\end{enumerate}
\noindent
Clearly, $x_\alpha\convo x\Longleftrightarrow\{(x_\alpha-x)\}\convco 0$,
and $x_\alpha\convr x\Longleftrightarrow\{(x_\alpha-x)\}\convcr 0$,
and if ${\cal B}\convco 0$ or ${\cal B}\convcr 0$ then each net of
${\cal B}$ eventually lies in $X_+-X_+$.

\begin{definition}\label{COCS and co}
{\em
(cf., \cite[Definition 2.1]{E0-2024})
Let ${\cal T}\subseteq{\cal L}(X,Y)$.\\ When $X$ and $Y$ are ordered vector spaces, the set ${\cal T}$ is
\begin{enumerate}[$a)$]
\item\ 
{\em collectively order bounded} (briefly, ${\cal T}\in{\bf L}_{ob}(X,Y)$) 
if, for every order interval $[a,b]$ in $X$ there exists an order interval $[u,v]$ in $Y$
such that ${\cal T}[a,b]:=\bigcup\limits_{T\in \mathcal T}{T}[a,b]\subseteq [u,v]$.
\item\ 
{\em collectively \text{\rm o}-continuous} (briefly, ${\cal T}\in\text{\bf L}_{oc}(X,Y)$) if 
${\cal T}x_\alpha\convco 0$ whenever $x_\alpha\convo 0$; 
\item\ 
{\em collectively \text{\rm ru}-continuous} (briefly, ${\cal T}\in\text{\bf L}_{ruc}(X,Y)$) if 
${\cal T}x_\alpha\convcr 0$ whenever $x_\alpha\convr 0$; 
\end{enumerate}
When $X$ is an ordered vector space and $Y$ an ordered normed space, the set ${\cal T}$ is 
\begin{enumerate}[$d)$]
\item\ 
{\em collectively almost order bounded} (briefly, ${\cal T}\in\text{\bf L}_{aob}(X,Y)$)
if, for every $[a,b]\subseteq X$ and $\varepsilon>0$, there exists 
$[u,v]\subseteq Y$ such that 
${\cal T}[a,b]\subseteq [u,v]+\varepsilon B_Y$.
\end{enumerate}
When $X$ is an ordered vector space and $Y$ is a normed space, the set ${\cal T}$ is 
\begin{enumerate}[$e)$]
\item\ 
{\em collectively order to norm bounded} (briefly, ${\cal T}\in\text{\bf L}_{onb}(X,Y)$) 
if the set ${\cal T}[a,b]$ is norm bounded for every $[a,b]\subseteq X$.
\end{enumerate}}
\end{definition}

\bigskip
The present paper is organized as follows.
Section 2 begins with Theorem~\ref{o-cont are obdd} saying that 
every collectively order continuous set of operators from an Archime\-dean ordered vector space with a generating cone to an ordered vector space
is collectively order bounded. Theorem~\ref{r-cont} describes relationship between 
collectively ru-continuous and collectively order bounded sets, and
Theorem~\ref{appl1} says that, under rather mild assumptions,
every collectively order to norm bounded set of operators is 
norm bounded. As a consequence, Corollary \ref{single r-cont = ob} says that 
each order to norm bounded operator from an ordered Banach space with a closed generating cone to a normed space is continuous.
Proposition \ref{prop-appl1} tell us that every order bounded operator between ordered Banach spaces with closed generating cones is continuous.
Section 3 is devoted to applications of obtained results to operator semigroups on ordered vector spaces. 

For the terminology and notations that are not explained 
in the text, we refer to \cite{AB2006,AT2007,E2007,K1985}.

\section{\large Main results}

\hspace{4mm}
We are going to prove the following results which generalize the corresponding ones obtained 
in \cite{AS2005,E0-2024,TT2020} to the setting of ordered vector spaces. 
\begin{enumerate}[-]
\item
Theorem \ref{o-cont are obdd} saying that every collectively order continuous set of operators from an Archi\-me\-dean ordered vector space 
with a generating cone to any ordered vector space is collectively order bounded. 
\end{enumerate}
\begin{enumerate}[-]
\item
Theorem \ref{r-cont} saying that collectively ru-continuous sets of operators between ordered vector spaces with generating cones agree with collectively order bounded sets.
\end{enumerate}
\begin{enumerate}[-] 
\item
Theorem \ref{appl1} saying that every collectively order to norm bounded set of operators from an ordered Banach space with a closed generating cone to a normed space norm bounded
\end{enumerate}

Let us begin with the following generalization of the first part of \cite[Theorem 2.2]{E0-2024} which is 
a collective version of the result by Abramovich and Sirotkin.
The proof is based on ideas in the proof of \cite[Theorem 2.1]{AS2005}.

\begin{theorem}\label{o-cont are obdd}
Let $X$ be an Archimedean ordered vector space with a generating cone. 
Then $\text{\bf L}_{oc}(X,Y)\subseteq\text{\bf L}_{ob}(X,Y)$ for every ordered vector space $Y$.
\end{theorem}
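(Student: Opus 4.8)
The plan is to reduce the statement to order intervals of the form $[0,w]$ and then, for each such interval, to produce a single net in $X$ that o-converges to $0$ while the very definition of $\convco 0$ forces its image to dominate all of $\mathcal T[0,w]$ at once.

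First I would use the generating cone to reduce to positive intervals. Given any $[a,b]\subseteq X$, write $a=a_1-a_2$ and $b=b_1-b_2$ with $a_i,b_i\in X_+$, and set $w=a_2+b_1\in X_+$. A short check gives $[a,b]\subseteq -a_2+[0,w]$, so for $T\in\mathcal T$ and $z\in[a,b]$ one has $Tz=Ts-Ta_2$ with $s\in[0,w]$ and $a_2\in[0,w]$; hence $\mathcal T[a,b]\subseteq \mathcal T[0,w]-\mathcal T[0,w]$. Consequently, if every $\mathcal T[0,w]$ is order bounded, say $\mathcal T[0,w]\subseteq[p,q]$, then $\mathcal T[a,b]\subseteq[p-q,\,q-p]$, and $\mathcal T\in\text{\bf L}_{ob}(X,Y)$. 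So it suffices to prove that $\mathcal T[0,w]$ is order bounded for each fixed $w\in X_+$.

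Fix $w\in X_+$ and consider the directed set $A=\mathbb{N}\times[0,w]$ with $(n,y)\preceq(m,z)$ iff $n\le m$, together with the net $x_{(n,y)}=\tfrac1n y$ in $X$. I would check $x_{(n,y)}\convo 0$: since $0\le\tfrac1n y\le\tfrac1n w$ and the sequence $g_m:=\tfrac1m w$ satisfies $g_m\downarrow 0$ (the Archimedean property gives $\inf_m\tfrac1m w=0$), every tail $\{(n,y):n\ge m\}$ lies between $-g_m$ and $g_m$. Applying $\mathcal T\in\text{\bf L}_{oc}(X,Y)$ yields $\mathcal T x_{(n,y)}\convco 0$, i.e.\ a net $h_\beta\downarrow 0$ in $Y$ such that for each $\beta$ there is an index past which $\pm T x_{(n,y)}\le h_\beta$ for all $T\in\mathcal T$. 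Fixing one $\beta_0$ and its threshold $(n_0,\cdot)$, and using that the order on $A$ ignores the second coordinate, every index $(n_0,y)$ with arbitrary $y\in[0,w]$ lies past the threshold; hence $\pm\tfrac1{n_0}Ty\le h_{\beta_0}$ for all $y\in[0,w]$ and all $T\in\mathcal T$, that is, $\mathcal T[0,w]\subseteq[-n_0 h_{\beta_0},\,n_0 h_{\beta_0}]$.

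The main point, and the only delicate step, is the design of this net: by directing $A$ solely through the first coordinate, the whole interval $[0,w]$ is swept within every tail, so the single domination coming from $\convco 0$ collapses into one uniform bound on $T[0,w]$ over all $T\in\mathcal T$ simultaneously. The $\pm$ built into the definition of $\convco 0$ supplies both an upper and a lower bound at once, so no separate "bounded above / bounded below" case analysis is required, and no completeness of $Y$ is needed. The remaining verifications, namely the generating-cone reduction and $\tfrac1m w\downarrow 0$, are routine.
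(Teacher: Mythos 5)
Your proof is correct and is essentially the paper's own argument: the same reduction via the generating cone to intervals $[0,w]$, and the same key net $x_{(n,y)}=\frac{1}{n}y$ indexed by $\mathbb{N}\times[0,w]$, arranged so that every tail sweeps the whole interval and a single threshold from the definition of $\convco$ collapses into a uniform bound $\mathcal{T}[0,w]\subseteq[-n_0h_{\beta_0},n_0h_{\beta_0}]$. The only (cosmetic) difference is that you direct the index set by the first coordinate alone, which is a preorder, while the paper uses the lexicographic partial order and passes to level $m_{\beta_0}+1$; both are legitimate since directed index sets for nets need only be preordered.
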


\begin{proof}
Let ${\cal T}\in\text{\bf L}_{oc}(X,Y)$. Since $X_+-X_+=X$, it is enough to prove 
order boundedness of ${\cal T}[0,b]$ for every $b\in X_+$.
Take $[0,b]\subseteq X$. Following \cite[Theorem 2.1]{AS2005},
consider the lexicographically directed set ${\cal I}=\mathbb{N}\times [0,b]$: 
$(m,z)\ge(n,z')$ if $m>n$ or $m=n$ and $z\ge z'$, and let $x_{(k,z)}=\frac{1}{k}z$.
Since $X$ is Archimedean, $\frac{1}{n}b\downarrow 0$.
It follows from $0\le x_{(m,z)}=\frac{1}{m}z\le\frac{1}{n}b$ for $(m,z)\ge(n,b)$
that $x_{(m,z)}\convo 0$.

Since ${\cal T}\in\text{\bf L}_{oc}(X,Y)$, 
there exists $g_\beta\downarrow 0$ in $Y$ such that, 
for every $\beta$ there exists $(m_\beta,y_\beta)$ satisfying
$\pm T x_{(m,y)}\le g_\beta$ for all $T\in{\cal T}$ and $(m,y)\ge(m_{\beta},y_{\beta})$.
Pick any $g_{\beta_0}$.  Since $(m_{\beta_0}+1,y)\ge(m_{\beta_0},y_{\beta_0})$, 
$$
   \pm T\Big(\frac{y}{m_{\beta_0}+1}\Big)=\pm Tx_{(m_{\beta_0}+1,y)}\le g_{\beta_0}\ \ \ 
   (y\in[0,b],T\in{\cal T}).
$$ 
Therefore, ${\cal T}[0,b]\subseteq[-(m_{\beta_0}+1)g_{\beta_0},(m_{\beta_0}+1)g_{\beta_0}]$.
\end{proof}

\noindent
In particular, this extends the Abramovich--Sirotkin theorem as follows.

\begin{corollary}\label{o-cont op is ob}
Each order continuous operator from an Archimedean ordered vector space with generating cone to any ordered vector space is order bounded.
\end{corollary}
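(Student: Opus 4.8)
The plan is to deduce the corollary directly from Theorem~\ref{o-cont are obdd} by specializing to a singleton set of operators. Let $T\in{\cal L}(X,Y)$ be order continuous, where $X$ is Archimedean with generating cone and $Y$ is an arbitrary ordered vector space, and put ${\cal T}=\{T\}$. The whole task then reduces to checking that, for a singleton, the \emph{collective} hypothesis and the \emph{collective} conclusion of Theorem~\ref{o-cont are obdd} coincide with the ordinary single-operator notions appearing in the statement of the corollary.

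First I would settle the hypothesis side. By the equivalence recorded just before Definition~\ref{COCS and co}, a single net o-converges to $0$ exactly when, viewed as a one-element family, it collectively o-converges to $0$; applied to the image net this gives $Tx_\alpha\convo 0\Longleftrightarrow\{(Tx_\alpha)\}\convco 0$. Hence for ${\cal T}=\{T\}$ the condition ``${\cal T}x_\alpha\convco 0$ whenever $x_\alpha\convo 0$'' is literally the assertion that $Tx_\alpha\convo 0$ whenever $x_\alpha\convo 0$, i.e. order continuity of $T$. Thus order continuity of $T$ is precisely the statement $\{T\}\in\text{\bf L}_{oc}(X,Y)$.

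Next I would settle the conclusion side, which is immediate from the definitions. For ${\cal T}=\{T\}$ one has ${\cal T}[a,b]=T[a,b]$, so collective order boundedness of $\{T\}$---the existence, for each order interval $[a,b]\subseteq X$, of an order interval $[u,v]\subseteq Y$ with $T[a,b]\subseteq[u,v]$---is exactly order boundedness of $T$. Therefore $\{T\}\in\text{\bf L}_{ob}(X,Y)$ is equivalent to $T$ being order bounded.

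With these two identifications in hand, the corollary follows in one line: since $X$ is Archimedean with generating cone and $\{T\}\in\text{\bf L}_{oc}(X,Y)$, Theorem~\ref{o-cont are obdd} gives $\{T\}\in\text{\bf L}_{ob}(X,Y)$, that is, $T$ is order bounded. There is no genuine obstacle here beyond making the singleton translations explicit; the only point requiring a moment's care is that collective o-convergence of a one-element family of nets reduces to ordinary o-convergence, and this is exactly the observation stated in the excerpt preceding Definition~\ref{COCS and co}.
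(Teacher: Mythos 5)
Your proof is correct and matches the paper's approach: the corollary is stated there as an immediate specialization of Theorem~\ref{o-cont are obdd} to the singleton ${\cal T}=\{T\}$, exactly as you argue. Your explicit verification that the collective notions reduce to the single-operator ones for singletons is the (routine) content the paper leaves implicit.
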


\noindent
Note that the Archimedean assumption, which has being missed in \cite[Theorem 2.1]{AS2005},
is essential in Theorem \ref{o-cont are obdd} and in Corollary \ref{o-cont op is ob} even in the vector lattice setting.

\begin{example}\label{ex to o-cont op is ob}
Let $X$ be the lexicographically ordered real plane $\mathbb{R}^2$, 
$Y$ the coordi\-na\-te-wise ordered $\mathbb{R}^2$. Both $X$ and $Y$ are vector lattices.
The identity operator $T:X\to Y$ is not order bounded since
the order interval $[(0,0),(2,0)]$ of $X$ contains line $x=1$ that is not order
bounded in $Y$. However, $T$ is order continuous. To see this, it is enough 
to show that $(x_\alpha,y_\alpha)\downarrow 0$ in $X$ implies 
$T((x_\alpha,y_\alpha))=(x_\alpha,y_\alpha)\convo 0$ in $Y$. 
Suppose $(x_\alpha,y_\alpha)\downarrow 0$ in $X$ and let $\varepsilon>0$. 
There exists $\alpha_0$ with $x_\alpha=0$ for $\alpha\ge\alpha_0$.
Now, take $\alpha_1\ge\alpha_0$ 
such that $0\le\max\{x_\alpha,y_\alpha\}=y_\alpha\le\varepsilon$ 
for $\alpha\ge\alpha_1$. It follows that $T(x_\alpha,y_\alpha)\convo 0$ in $Y$.
\end{example}

An inspection of the proof of \cite[Theorem 2.1]{AS2005} shows that Abra\-mo\-vich and Sirotkin proved 
already that each \text{\rm ru}-continuous operator between vector lattices $E$ and $F$ is order bounded. Conversely, 
each order bounded operator $T:E\to F$ is \text{\rm ru}-continuous since, for every $u\in E_+$ there exists $w\in F$ 
such that $T[-u,u]\subseteq[-w,w]$, and hence $|x_\alpha|\le\frac{1}{n}u$ implies $|Tx_\alpha|\le\frac{1}{n}w$.
This assertion was independently proved by Taylor and Troitsky in \cite[Theorem 5.1]{TT2020}. 
Note that this fact can also be derived from the nonstandard criteria of order boundedness of operators
\cite[Theorem 1.7.2]{E1996} obtained by the first author. The following theorem generalizes this fact to the setting of ordered vector space.

\begin{theorem}\label{r-cont}
Let $X$ and $Y$ be ordered vector spaces.
\begin{enumerate}[$i)$]
\item 
If $X$ has a generating cone then $\text{\bf L}_{ruc}(X,Y)\subseteq\text{\bf L}_{ob}(X,Y)$.
\item 
If $Y$ has a generating cone then $\text{\bf L}_{ob}(X,Y)\subseteq\text{\bf L}_{ruc}(X,Y)$.
\end{enumerate}
\end{theorem}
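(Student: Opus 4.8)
The plan is to treat the two inclusions separately, since they are of different character: part $ii)$ is a direct computation generalizing the single-operator argument already sketched in the text, while part $i)$ follows the same lexicographic-net device used in the proof of Theorem~\ref{o-cont are obdd}.

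For part $i)$, I would argue exactly as in Theorem~\ref{o-cont are obdd}. Let ${\cal T}\in\text{\bf L}_{ruc}(X,Y)$. Since $X_+-X_+=X$, it suffices to bound ${\cal T}[0,b]$ for each $b\in X_+$. I would again form the lexicographically directed set ${\cal I}=\mathbb{N}\times[0,b]$ and set $x_{(k,z)}=\tfrac1k z$. The key observation is that this net is not merely o-null but ru-null: taking $u=b$ and the increasing sequence of indices $\alpha_n=(n,b)$, one has $0\le x_{(m,z)}=\tfrac1m z\le\tfrac1n b$ whenever $(m,z)\ge(n,b)$, so $x_{(k,z)}\convr 0$. (Note that, unlike in Theorem~\ref{o-cont are obdd}, the Archimedean property is not needed here, which matches the hypotheses.) Collective ru-continuity then yields $w\in Y_+$ and an increasing sequence $(\beta_n)$ in ${\cal I}$ with $\pm T x_{(m,z)}\le\tfrac1n w$ for $(m,z)\ge\beta_n$ and $T\in{\cal T}$. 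Evaluating at $n=1$ and writing $\beta_1=(m_1,z_1)$, the index $(m_1+1,z)$ dominates $\beta_1$ for every $z\in[0,b]$, whence $\pm T(\tfrac{z}{m_1+1})\le w$ and therefore ${\cal T}[0,b]\subseteq[-(m_1+1)w,(m_1+1)w]$.

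For part $ii)$, let ${\cal T}\in\text{\bf L}_{ob}(X,Y)$ and suppose $x_\alpha\convr 0$. Fix $u\in X_+$ and an increasing sequence $(\alpha_n)$ with $x_\alpha\in\tfrac1n[-u,u]$ for $\alpha\ge\alpha_n$. Applying collective order boundedness to the single interval $[-u,u]$ produces an interval $[p,q]\subseteq Y$ with ${\cal T}[-u,u]\subseteq[p,q]$, so that $\tfrac1n p\le Tx_\alpha\le\tfrac1n q$ for all $\alpha\ge\alpha_n$ and $T\in{\cal T}$. To turn this two-sided estimate into the symmetric form required by $\convcr$, I would use the generating cone of $Y$: since $Y=Y_+-Y_+$, both $q$ and $-p$ have upper bounds in $Y_+$, and adding them gives a single $w\in Y_+$ with $-w\le p$ and $q\le w$. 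Then $\pm Tx_\alpha\le\tfrac1n w$ for $\alpha\ge\alpha_n$, $T\in{\cal T}$, i.e. ${\cal T}x_\alpha\convcr 0$.

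I expect the only genuinely delicate points to be the two places where the hypotheses are actually consumed: in $i)$, verifying that the lexicographic net ru-converges (so that ru-continuity is applicable) and correctly reading off the bound at level $n=1$; and in $ii)$, the symmetrization step, which is exactly where the generating cone of $Y$ is indispensable---without it the order interval $[p,q]$ need not be contained in any symmetric interval $[-w,w]$, and the argument would break down.
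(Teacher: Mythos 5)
Your proof is correct and takes essentially the same route as the paper's: part $i)$ uses the identical lexicographic net $x_{(k,z)}=\frac{1}{k}z$ on ${\cal I}=\mathbb{N}\times[0,b]$ (where ru-convergence indeed holds without the Archimedean hypothesis) with the bound read off at level $n=1$, and part $ii)$ is the paper's argument with the symmetrization of ${\cal T}[-u,u]\subseteq[p,q]$ into an interval $[-w,w]$ via $Y=Y_+-Y_+$ merely spelled out more explicitly than the paper's one-line version.
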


\begin{proof}
$i)$ \ \
Let ${\cal T}\in\text{\bf L}_{ruc}(X,Y)$. Since $X_+-X_+=X$, to show ${\cal T}\in\text{\bf L}_{ob}(X,Y)$
it suffices to prove that ${\cal T}[0,b]$ is order bounded in $Y$ for each $b\in X_+$.
Take $b\in X_+$. Consider ${\cal I}=\mathbb{N}\times [0,b]$
and the net $x_{(k,z)}=\frac{1}{k}z$ like in the proof of Theorem \ref{o-cont are obdd}.
If $(m,z)\ge(n,b)$ in ${\cal I}$ then either $(m,z)=(n,b)$ or $m>n$, 
and hence $0\le x_{(m,z)}=\frac{1}{m}z\le\frac{1}{n}b$. Thus, $x_{(m,z)}\convr 0$ in $X$.

Since ${\cal T}\in\text{\bf L}_{ruc}(X,Y)$, there exist $u\in Y_+$ and an increasing 
sequence $(k_n,z_n)$ in ${\cal I}$ satisfying
$\pm T\big(\frac{z}{k}\big)=\pm Tx_{(k,z)}\le\frac{1}{n}u$ for
all $T\in{\cal T}$ and $(k,z)\ge(k_n,z_n)$. In particular,
$\pm T\big(\frac{z}{k_1+1}\big)=\pm Tx_{(k_1+1,z)}\le u$ for all $T\in{\cal T}$ and $z\in[0, b]$.
So, ${\cal T}[0,b]\subseteq[-(k_1+1)u,(k_1+1)u]$. 
It follows ${\cal T}\in\text{\bf L}_{ob}(X,Y)$.

\medskip
$ii)$ \ \
Let ${\cal T}\in\text{\bf L}_{ob}(X,Y)$ and $x_\alpha\convr 0$.
For some $u\in X_+$, there exists an increasing sequence 
$(\alpha_n)$ of indices such that $nx_\alpha\in[-u,u]$ for all $\alpha\ge\alpha_n$.
Since ${\cal T}\in\text{\bf L}_{ob}(X,Y)$ and $Y=Y_+-Y_+$ then ${\cal T}[-u,u]\subseteq[-w,w]$
for some $w\in Y_+$, and hence $n{\cal T}x_\alpha\subseteq[-w,w]$ for all $\alpha\ge\alpha_n$.
It follows ${\cal T}\in\text{\bf L}_{ruc}(X,Y)$.
\end{proof}

\begin{corollary}\label{r-cont = ob}
Order bounded operators between ordered vector spaces with generating cones agree with \text{\rm ru}-continuous operators.
\end{corollary}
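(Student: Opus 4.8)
The plan is to derive this corollary directly from Theorem \ref{r-cont} by specializing to singleton sets of operators. The key observation is that when $\mathcal{T}=\{T\}$ consists of a single operator, the collective notions collapse to their single-operator counterparts. Indeed, by the equivalence $x_\alpha\convr x\Longleftrightarrow\{(x_\alpha-x)\}\convcr 0$ recorded in the remark preceding Definition \ref{COCS and co}, membership $\{T\}\in\text{\bf L}_{ruc}(X,Y)$ is precisely the statement that $T$ is \text{\rm ru}-continuous, while $\{T\}\in\text{\bf L}_{ob}(X,Y)$ says exactly that $T[a,b]$ is order bounded for every order interval $[a,b]\subseteq X$, i.e. that $T$ is order bounded.

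First I would fix a single operator $T\in{\cal L}(X,Y)$ and form the singleton $\mathcal{T}=\{T\}$. Since both $X$ and $Y$ are assumed to have generating cones, both hypotheses of Theorem \ref{r-cont} are simultaneously available. Applying part $i)$, which uses that $X$ has a generating cone, gives $\text{\bf L}_{ruc}(X,Y)\subseteq\text{\bf L}_{ob}(X,Y)$, so that \text{\rm ru}-continuity of $T$ forces order boundedness of $T$. Applying part $ii)$, which uses that $Y$ has a generating cone, gives $\text{\bf L}_{ob}(X,Y)\subseteq\text{\bf L}_{ruc}(X,Y)$, so that order boundedness of $T$ forces \text{\rm ru}-continuity of $T$.

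Combining the two inclusions on singletons yields the desired equivalence: a single operator between ordered vector spaces with generating cones is order bounded if and only if it is \text{\rm ru}-continuous. There is essentially no obstacle here, since the corollary is a direct specialization of the theorem; the only point requiring care is the translation between the collective definitions applied to a one-element set and the classical single-operator notions, which is supplied by the singleton-convergence remark cited above.
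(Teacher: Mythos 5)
Your proposal is correct and matches the paper's (implicit) argument: the corollary is stated there without proof precisely because it is the singleton specialization of Theorem \ref{r-cont}, with parts $i)$ and $ii)$ supplying the two inclusions under the generating-cone hypotheses on $X$ and $Y$ respectively. Your careful translation between the collective definitions on a one-element set and the classical single-operator notions is exactly the routine verification the paper leaves to the reader.
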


Recall that order intervals in an ordered Banach space with a closed cone are norm bounded 
if and only if the cone is normal (cf., \cite[Theorem 2.40]{AT2007}). 
We include the following elementary collective extension of this fact.

\begin{proposition}\label{normal}
Let $X$ be an ordered vector space and $Y$ an ordered Banach space with closed generating normal cone.
Then $\text{\bf L}_{aob}(X,Y)\subseteq\text{\bf L}_{onb}(X,Y)$.
\end{proposition}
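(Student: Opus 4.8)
The plan is to unwind the two definitions in Definition~\ref{COCS and co} and to feed the normality hypothesis into the proof through the characterization recalled just above the statement, namely that order intervals of a closed normal cone are norm bounded (cf.\ \cite[Theorem 2.40]{AT2007}). Let ${\cal T}\in\text{\bf L}_{aob}(X,Y)$, fix an arbitrary order interval $[a,b]\subseteq X$, and aim to show that ${\cal T}[a,b]$ is norm bounded.

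First I would apply collective almost order boundedness with a single convenient value of the tolerance, say $\varepsilon=1$. By definition of $\text{\bf L}_{aob}(X,Y)$, this produces an order interval $[u,v]\subseteq Y$ with ${\cal T}[a,b]\subseteq[u,v]+B_Y$.

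Next I would invoke the cited fact: since the cone of $Y$ is closed and normal, the order interval $[u,v]$ is norm bounded, so there is $M>0$ with $[u,v]\subseteq M\,B_Y$. Combining the two inclusions and using the triangle inequality gives ${\cal T}[a,b]\subseteq M\,B_Y+B_Y\subseteq(M+1)B_Y$, which is precisely the assertion that ${\cal T}[a,b]$ is norm bounded. As $[a,b]$ was arbitrary, ${\cal T}\in\text{\bf L}_{onb}(X,Y)$, completing the argument.

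There is no genuine obstacle here; the statement is elementary once the normality of the cone is converted into norm boundedness of order intervals, which is exactly the role of \cite[Theorem 2.40]{AT2007}. The only points worth noting are that a single choice of $\varepsilon$ suffices (one does not need the full $\varepsilon\to 0$ strength of almost order boundedness, merely one finite approximation), and that the generating hypothesis on the cone of $Y$ is not actually used in this direction, every order interval $[u,v]$ being a legitimate subset of $Y$ regardless.
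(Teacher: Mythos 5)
Your proof is correct and takes essentially the same route as the paper: both apply the definition of $\text{\bf L}_{aob}(X,Y)$ once with the single tolerance $\varepsilon=1$ and then invoke \cite[Theorem 2.40]{AT2007} (a closed normal cone has norm-bounded order intervals) to bound the resulting interval, hence ${\cal T}[a,b]$. Your closing observations are also accurate: one fixed $\varepsilon$ suffices, and whereas the paper writes the interval symmetrically as $[-y,y]$, keeping $[u,v]$ works just as well and shows that the generating hypothesis on $Y_+$ plays no role in this inclusion.
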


\begin{proof}
Let ${\cal T}\in\text{\bf L}_{aob}(X,Y)$ and $[a,b]\subseteq X$. 
Then ${\cal T}[a,b]\subseteq [-y,y]+B_Y$
for some $y\in Y_+$. Since $Y_+$ is normal, $[-y,y]+B_Y$
is norm bounded by \cite[Theorem 2.40]{AT2007}.
Therefore, ${\cal T}\in\text{\bf L}_{onb}(X,Y)$. 
\end{proof}

\noindent
The normality of positive cone in the range is essential in Proposition \ref{normal}. 

\begin{example}\label{example-(r-0x)}
Let $X=C^1[0,1]$ be an ordered Banach space of continuously differentiable functions on $[0,1]$.
The identity operator $I_X$ is norm and order bounded yet not order to norm bounded.
In particular, $I_X\in\text{\bf L}_{aob}(X,Y)\setminus\text{\bf L}_{onb}(X,Y)$.
\end{example}

The following result generalizes \cite[Theorem 2.1]{E0-2024} to ordered Banach spaces.

\begin{theorem}\label{appl1}
Let $X$ be an ordered Banach space with a closed generating cone
and $Y$ be a normed space. Then each ${\cal T}\in\text{\bf L}_{onb}(X,Y)$ is norm bounded.
\end{theorem}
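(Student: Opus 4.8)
The plan is to argue by contradiction and to collapse the whole unit ball into a single order interval, where the hypothesis $\mathcal{T}\in\text{\bf L}_{onb}(X,Y)$ can be used. Suppose that $\mathcal{T}$ is \emph{not} norm bounded, i.e. $\sup_{T\in\mathcal{T}}\|T\|=\infty$ (where $\|T\|$ is allowed to be infinite for a non-continuous $T$). Then for each $n$ I can pick $T_n\in\mathcal{T}$ and $x_n\in B_X$ with $\|T_nx_n\|\ge 4^n$. The aim is to build from the vectors $x_n$ a single positive element $z$ whose order interval $[0,z]$ already contains suitably rescaled test vectors, forcing $\{T_n(\cdot)\}$ to be norm bounded there and contradicting the growth rate $4^n$.

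The first, and main, step is a quantitative decomposition of arbitrary elements into positive ones with controlled norm. Since $X_+$ is closed and generating in the Banach space $X$, a Baire-category/open-mapping argument (the Klee--Andô decomposition; cf. the treatment of closed generating cones in \cite{AT2007}) provides a constant $M\ge 1$ such that every $x\in X$ admits a representation $x=u-v$ with $u,v\in X_+$ and $\|u\|,\|v\|\le M\|x\|$. Concretely, the symmetric convex set $(X_+\cap B_X)-(X_+\cap B_X)$ is absorbing because the cone is generating, so by Baire its closure has nonempty interior, and completeness of $X$ upgrades this to the inclusion of a ball, which yields the bounded decomposition.

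With this in hand I would apply it to each $x_n$, writing $x_n=u_n-v_n$ with $u_n,v_n\in X_+$ and $\|u_n\|,\|v_n\|\le M$. From $\|T_nx_n\|\le\|T_nu_n\|+\|T_nv_n\|$ one of the two summands is at least $\tfrac12 4^n$; call the associated positive vector $w_n$, so that $w_n\in X_+$, $\|w_n\|\le M$, and $\|T_nw_n\|\ge\tfrac12 4^n$. Rescaling by $z_n:=2^{-n}w_n\in X_+$ gives $\sum_n\|z_n\|\le M\sum_n 2^{-n}<\infty$ while $\|T_nz_n\|\ge\tfrac12 2^n\to\infty$. Because $X$ is complete and the series is absolutely summable, $z:=\sum_n z_n$ converges; as $X_+$ is closed and all partial sums are positive, $z\in X_+$ and $z-z_n=\sum_{k\ne n}z_k\in X_+$, so $0\le z_n\le z$ for every $n$. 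Hence each $z_n$ lies in the order interval $[0,z]$, whence $T_nz_n\in\mathcal{T}[0,z]$, a norm-bounded set by $\mathcal{T}\in\text{\bf L}_{onb}(X,Y)$ — contradicting $\|T_nz_n\|\to\infty$. This forces $\sup_{T\in\mathcal{T}}\|T\|<\infty$.

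The hard part will be the first step: securing (or correctly invoking) the bounded decomposition with a \emph{uniform} constant $M$. This is exactly where closedness of the cone and completeness of $X$ are genuinely needed, and it is the device that converts ``norm boundedness on each fixed order interval'' into control over the entire unit ball. By contrast, the summability trick in the final step is the routine mechanism for absorbing a whole sequence of positive vectors into one order interval, and once the decomposition is available the remaining estimates are elementary.
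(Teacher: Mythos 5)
Your proof is correct and takes essentially the same route as the paper's: both hinge on the Krein--Smulian bounded decomposition theorem for a closed generating cone (which you re-derive via Baire category rather than cite, as in \cite[Theorem 2.37]{AT2007}) followed by absorbing a norm-summable sequence of positive vectors into a single order interval $[0,z]$ to contradict ${\cal T}\in\text{\bf L}_{onb}(X,Y)$. The only differences are cosmetic: the paper applies Krein--Smulian once at the level of the unit ball to obtain positive $x_n$ directly and uses weights $n^{-2}$ against growth $n^3$, whereas you decompose each $x_n$ individually, keep the dominant positive part, and use geometric rates $2^{-n}$ against $4^n$.
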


\begin{proof}
Let ${\cal T}\in\text{\bf L}_{onb}(X,Y)$. Suppose, in contrary,
${\cal T}B_X$ is not norm bounded.
The Krein -- Smulian theorem (cf., \cite[Theorem 2.37]{AT2007})
implies $\alpha B_X\subseteq B_X\cap X_+-B_X\cap X_+$ for some $\alpha>0$,
and hence ${\cal T}(B_X\cap X_+)$ is not norm bounded. Pick sequences 
$(x_n)$ in $B_X\cap X_+$ and $(T_n)$ in ${\cal T}$ with $\|T_nx_n\|\ge n^3$ for all $n$, 
and set $x:=\|\cdot\|$-$\sum\limits_{n=1}^\infty n^{-2}x_n\in X_+$.
Since ${\cal T}\in\text{\bf L}_{onb}(X,Y)$ then ${\cal T}[0,x]\subseteq MB_Y$ for some $M<\infty$.
It follows from $n^{-2}x_n\in[0,x]$ that $\|T_n(n^{-2}x_n)\|\le M$ for every $n$.
This is absurd, because $\|T_n(n^{-2}x_n)\|\ge n$ for all $n$.
The proof is complete.
\end{proof}

\begin{corollary}\label{single r-cont = ob}
Every order to norm bounded operator from an ordered Banach space $X$ with a closed generating cone to a normed space $Y$ is continuous.
\end{corollary}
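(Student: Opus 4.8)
The plan is to reduce the statement to Theorem~\ref{appl1} by passing to the singleton collection. First I would observe that, by definition, a single operator $T\colon X\to Y$ being order to norm bounded means precisely that $T[a,b]$ is norm bounded for every order interval $[a,b]\subseteq X$. Since $\{T\}[a,b]=T[a,b]$, this is exactly the assertion that the singleton ${\cal T}=\{T\}$ belongs to $\text{\bf L}_{onb}(X,Y)$. Thus the hypotheses of Theorem~\ref{appl1} are met with this ${\cal T}$: indeed $X$ is an ordered Banach space with a closed generating cone and $Y$ is a normed space, so nothing further need be checked.

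Applying Theorem~\ref{appl1} then yields that $\{T\}$ is norm bounded, that is, the image $TB_X=\{T\}B_X$ is a norm-bounded subset of $Y$. For a linear operator between normed spaces, carrying the unit ball into a norm-bounded set is the very definition of (norm) boundedness, which is equivalent to continuity. Hence $T$ is continuous, completing the argument.

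As for obstacles, there is essentially none beyond bookkeeping. The one point requiring care is the identification of the single-operator notion of order to norm boundedness with membership of the singleton $\{T\}$ in $\text{\bf L}_{onb}(X,Y)$; once this is in place, the corollary is an immediate specialization of Theorem~\ref{appl1} together with the standard boundedness-equals-continuity equivalence for linear maps between normed spaces.
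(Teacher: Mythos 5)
Your proposal is correct and is exactly the paper's route: the corollary is stated as an immediate consequence of Theorem~\ref{appl1} applied to the singleton ${\cal T}=\{T\}$, whose norm boundedness means $TB_X$ is bounded, i.e., $T$ is continuous.
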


\noindent
It is worth noting that Corollary \ref{single r-cont = ob} generalizes the well known fact that every order bounded  
operator between Banach lattices is bounded. It is also known that every positive operator $T:X\to Y$ is continuous whenever $X$ is 
an ordered Banach space with a closed generating cone and $Y$ is an ordered Banach space with a closed cone, 
see Arendt \cite[Appendix]{A1984}, Aliprantis and Tourky \cite[Theorem 2.32]{AT2007}, or Arendt and Nittka \cite[Theorem 2.8]{AN2009}.

\medskip
Note that a positive operator need not to be order to norm bounded. Indeed, the identity operator $I_X$ on 
an ordered Banach space $X$ is order to norm bounded if and only if the positive cone $X_+$ is normal.
Therefore, the next proposition does not follow from Corollary \ref{single r-cont = ob}. 
Its proof below is a modification of the proof of \cite[Theorem 2.32]{AT2007}.

\begin{proposition}\label{prop-appl1}
Let $X$ and $Y$ be ordered Banach spaces with closed generating cones. Then every order bounded operator $T:X\to Y$ is continuous.
\end{proposition}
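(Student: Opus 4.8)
The plan is to deduce continuity from the closed graph theorem: since $X$ and $Y$ are Banach spaces and $T$ is linear, it suffices to show that $x_n\to 0$ together with $Tx_n\to y$ forces $y=0$. The remark preceding the statement warns that this cannot be routed through order-to-norm boundedness, because order intervals of $Y$ need not be norm bounded (the cone $Y_+$ is not assumed normal); so instead I would squeeze $Tx_n$ between two norm-null sequences and pass to the limit using only the closedness of $Y_+$.

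First I would record the decomposition furnished by the Krein--Smulian theorem exactly as in the proof of Theorem \ref{appl1}: since $X_+$ is closed and generating, there is $M<\infty$ such that every $x\in X$ splits as $x=u-v$ with $u,v\in X_+$ and $\|u\|,\|v\|\le M\|x\|$. Given $x_n\to 0$ with $Tx_n\to y$, I pass to a subsequence with $\|x_n\|\le 4^{-n}$ and write $x_n=u_n-v_n$ accordingly, so that $\|u_n\|,\|v_n\|\le M4^{-n}$.

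The key device is amplification. Since $\sum_n\|2^nu_n\|\le M\sum_n 2^{-n}<\infty$ and $X$ is complete, the series $\tilde u:=\sum_n 2^nu_n$ converges; as $X_+$ is closed and all partial sums are positive, $\tilde u\in X_+$ and $0\le 2^nu_n\le\tilde u$ for every $n$, and similarly for $\tilde v:=\sum_n 2^nv_n$. Now order boundedness of $T$ enters: $T[0,\tilde u]$ and $T[0,\tilde v]$ lie in fixed order intervals $[p_1,q_1]$ and $[p_2,q_2]$ of $Y$. From $2^nu_n\in[0,\tilde u]$ I obtain $2^{-n}p_1\le Tu_n\le 2^{-n}q_1$, and likewise $2^{-n}p_2\le Tv_n\le 2^{-n}q_2$; subtracting gives $2^{-n}(p_1-q_2)\le Tx_n\le 2^{-n}(q_1-p_2)$.

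Finally, both $2^{-n}(q_1-p_2)$ and $2^{-n}(p_1-q_2)$ tend to $0$ in norm while $Tx_n\to y$, so the elements $2^{-n}(q_1-p_2)-Tx_n$ and $Tx_n-2^{-n}(p_1-q_2)$ of $Y_+$ converge to $-y$ and $y$ respectively; since $Y_+$ is closed this yields $y\le 0$ and $y\ge 0$, whence $y=0$ as the cone is proper, and the closed graph theorem gives continuity of $T$. I expect the only genuine point needing care to be the amplification step: it is precisely the factor $2^n$ inside the series, rather than merely summing the $u_n$, that converts the fixed order bound supplied by order boundedness into the norm-null squeeze $\pm 2^{-n}$ on $Tx_n$, and this is exactly what lets the argument dispense with the normality of $Y_+$ that the preceding remark shows to be unavailable.
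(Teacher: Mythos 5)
Your proof is correct and takes essentially the same route as the paper: both reduce to the closed graph theorem, squeeze $Tx_n$ between norm-null order bounds, and conclude $\pm y\in Y_+$, hence $y=0$, from the closedness (and properness) of the cone. The only difference is that where the paper cites \cite[Lemma 2.30]{AT2007} (every norm-null sequence has a relatively uniformly null subsequence) and Corollary \ref{r-cont = ob} (order bounded operators between spaces with generating cones are \text{\rm ru}-continuous), you rederive both inline: your Krein--Smulian decomposition with the amplified series $\tilde u=\sum_n 2^nu_n$ and $\tilde v=\sum_n 2^nv_n$ yields $\pm x_n\le 2^{-n}(\tilde u+\tilde v)$, i.e., $x_n\convr 0$ along the subsequence, and your order-interval squeeze is exactly the argument of Theorem \ref{r-cont}$(ii)$.
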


\begin{proof}
Let $T:X\to Y$ be order bounded. To establish that $T$ is continuous, it suffices to show that $T$ has a closed graph.
So, assume $x_n\convn 0$ in $X$ and $Tx_n\convn y$ in $Y$. We must show  that $y=0$.

According to \cite[Lemma 2.30]{AT2007}, there exist a subsequence $(n_k)$ such that $x_{n_k}\convr 0$. 
By Corollary \ref{r-cont = ob}, $T$ is \text{\rm ru}-continuous, and hence $Tx_{n_k}\convr 0$.
Thus, there exist some $u\in Y$ and a further subsequence $(n_{k_m})$ such that $\pm Tx_{n_{k_m}}\le \frac{1}{m}u$,
and hence $\frac{1}{m}u\pm Tx_{n_{k_m}}\in Y_+$ for each $m$. 
Since $Y_+$ is closed and $\frac{1}{m}u\pm Tx_{n_{k_m}}\convn\pm y$, we conclude that $\pm y\in Y_+$.
That is, y = 0 and the proof is finished.
\end{proof}

\noindent
We have the following collective extension of Proposition \ref{prop-appl1}.

\begin{proposition}\label{propa-appl1}
Let $X$ and $Y$ be ordered Banach spaces with closed generating cones, and $X^\sim\ne\{0\}$. The following assertions are equivalent.
\begin{enumerate}[$i)$]
\item\
Every ${\cal T}\in\text{\bf L}_{ob}(X,Y)$ is norm bounded.
\item\
The cone $Y_+$ is normal.
\end{enumerate}
\end{proposition}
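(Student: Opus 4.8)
The plan is to prove the two implications separately, using Theorem~\ref{appl1} for the forward direction and an explicit construction of rank-one operators for the converse. The hypothesis $X^\sim\ne\{0\}$ will be needed only in the second implication, to guarantee a supply of order bounded operators rich enough to detect a failure of normality in $Y$.

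For $ii)\Rightarrow i)$ I would take a set ${\cal T}\in\text{\bf L}_{ob}(X,Y)$ and an arbitrary order interval $[a,b]\subseteq X$. Collective order boundedness provides an order interval $[u,v]\subseteq Y$ with ${\cal T}[a,b]\subseteq[u,v]$, and since $Y$ is a Banach space with closed cone, normality of $Y_+$ forces $[u,v]$ to be norm bounded by \cite[Theorem 2.40]{AT2007}. Hence ${\cal T}[a,b]$ is norm bounded for every order interval, that is, ${\cal T}\in\text{\bf L}_{onb}(X,Y)$, and Theorem~\ref{appl1} then yields that ${\cal T}$ is norm bounded.

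For $i)\Rightarrow ii)$ I would argue by contraposition: assuming $Y_+$ is not normal, I would build a set in $\text{\bf L}_{ob}(X,Y)$ that fails to be norm bounded. Non-normality together with \cite[Theorem 2.40]{AT2007} gives an order interval that is not norm bounded, and after a translation one may take it of the form $[0,y]$ with $y\in Y_+$; I then choose $y_n\in[0,y]$ with $\|y_n\|\ge n$. Fixing a nonzero $\phi\in X^\sim$ (available since $X^\sim\ne\{0\}$), I set $T_nx=\phi(x)y_n$. For any order interval $[a,b]\subseteq X$, order boundedness of $\phi$ makes $c:=\sup_{x\in[a,b]}|\phi(x)|$ finite, and from $0\le y_n\le y$ one checks, splitting according to the sign of $\phi(x)$, that $\phi(x)y_n\in[-cy,cy]$ for all $x\in[a,b]$ and all $n$. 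Thus ${\cal T}:=\{T_n\}\in\text{\bf L}_{ob}(X,Y)$.

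The crux of the argument is to ensure that the norm unboundedness of ${\cal T}$ stems from $\|y_n\|\to\infty$ rather than from a pathology of $\phi$, since a priori an element of $X^\sim$ need not be norm continuous. The key observation that resolves this is that here $X^\sim\subseteq X^*$: every order bounded functional on $X$ is continuous, which is exactly Proposition~\ref{prop-appl1} applied with $Y=\mathbb{R}$, whose cone is closed and generating. Granting this, $\|\phi\|$ is finite and strictly positive, so $\|T_n\|=\|\phi\|\,\|y_n\|\to\infty$ and ${\cal T}$ is not norm bounded, contradicting $i)$. This step — recognizing the need for continuity of $\phi$ and verifying the collective order bound $\phi(x)y_n\in[-cy,cy]$ — is the one I expect to be the main obstacle; the remaining verifications are routine.
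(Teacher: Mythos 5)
Your proposal is correct and takes essentially the same route as the paper: $ii)\Rightarrow i)$ by using normality to norm-bound order intervals (the paper routes this through Proposition~\ref{normal}) and then invoking Theorem~\ref{appl1}, and $i)\Rightarrow ii)$ by contraposition with the rank-one family $\phi\otimes y_n$ for $y_n$ in a non-norm-bounded interval $[0,y_0]$. The only divergence is your ``crux'': the paper sidesteps the continuity of $\phi$ altogether by picking $x_0$ with $f_0(x_0)=1$ and observing that $\bigcup_{y\in[0,y_0]}(f_0\otimes y)(x_0)=[0,y_0]$ is already norm unbounded, so your appeal to Proposition~\ref{prop-appl1} (which is valid, as is the whole argument) is avoidable.
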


\begin{proof}
$i)\Longrightarrow ii)$\
Assume that $Y_+$ is not normal. Then, there exists $y_0\in X_+$ such that 
the interval $[0,y_0]$ is not bounded. Pick $f_0\ne 0$ in $X^\sim$ and take $x_0\in X$ with $f_0(x_0)=1$.
Then, $\{f_0\otimes y\}_{y\in[0,y_0]}\in\text{\bf L}_{ob}(X,Y)$. However, the set $\{f_0\otimes y\}_{y\in[0,y_0]}$ is not norm bounded
because of $\bigcup\limits_{y\in[0,y_0]}(f_0\otimes y)(x_0)=[0,y_0]$. 

\medskip
$ii)\Longrightarrow i)$\
Let ${\cal T}\in\text{\bf L}_{ob}(X,Y)$. Then, ${\cal T}\in\text{\bf L}_{aob}(X,Y)$. So, ${\cal T}\in\text{\bf L}_{onb}(X,Y)$
by Proposition \ref{normal}. It follows from Theorem \ref{appl1} that ${\cal T}$ is norm bounded.
\end{proof}

The following example shows that the condition that $X_+$ is closed and generating is essential 
in Theorem \ref{appl1}, in Corollary \ref{single r-cont = ob}, and in Proposition \ref{prop-appl1}.

\begin{example}\label{ex to appl1}
Consider an infinite dimensional Banach space $X$.
\begin{enumerate}[$a)$]
\item \ 
Let $X_+:=\{0\}$ and take an arbitrary discontinuous operator $T$ on $X$. Then $T:(X,X_+)\to X$ is order to norm 
bounded and order bounded simply because order intervals in $X$ are singletons. Clearly $X_+$ is closed but not generating.
\item \ 
Pick a normalized Hamel basis $\{\xi:\xi\in\Xi\}$ of $X$, identify $X$ with $c_{00}(\Xi)$, and let 
$X_+:=\Big\{\sum\limits_{\xi\in\Xi}\lambda_\xi\xi: \lambda_\xi\ge 0\Big\}$. Then $X_+-X_+=X$ but
$X_+$ is not closed $($e.g., by \text{\rm \cite[Proposition 4.1]{AN2009}}$)$. 
Pick any sequence $(\xi_n)_{n=0}^\infty$ of distinct elements of $\Xi$ and define 
$T:(X,X_+)\to X$ by $T\xi=0$ for $\xi\in\Xi\setminus\{\xi_n:n\in\mathbb{N}\}$ and $T\xi_n=n\xi_0$ for $n\in\mathbb{N}$.
Clearly, $T$ is order to norm bounded and positive, yet not continuous.  
\end{enumerate}
\end{example}

\begin{example}\label{ex2 to appl1}
Let $X$ be an ordered vector space with a generating cone which satisfies the dominated decomposition property 
$($i.e., $[0,v_1]+[0,v_2]=[0,v_1+v_2]$ for every $v_1,v_2$ in $X_+$$)$, and let
$Y$ be a Dedekind complete vector lattice. 
It is well known that the ordered vector space ${\cal L}_{ob}(X,Y)$ of order bounded operators of ${\cal L}(X,Y)$ is a Dedekind 
complete vector lattice {\em (cf., \cite[Theorem 83.4]{Z1983})}. Let $A$ be an order bounded subset of ${\cal L}_{ob}(X,Y)$, say
$A\subseteq[-T,T]$ for some positive operator $T\in{\cal L}_{ob}(X,Y)$. 
Let $x\in X_+$. Since $|Sz|\le|S|z\le Tz\le Tx$ for all $z\in[0,x]$ and $S\in A$, then the set $A$ is collectively order bounded.

\medskip
From the other hand, not every collectively order bounded set of operators is order bounded. Indeed, consider $X=\ell^\infty$, $Y=\mathbb{R}$,
and $A=\{\delta^n: n\in\mathbb{N}\}\subset\ell^\infty_+$, where $\delta^n(\bold{x})=x_n$. The set $A$ is collectively order bounded, yet
there is no $f\in\ell^\infty_+$ with $A\subseteq[0,f]$, since otherwise $f(1,1,\ldots,1,\ldots)\ge\sum_{k=1}^n\delta^k(1,1,\ldots,1,\ldots)=n$
for every $n\in\mathbb{N}$ that is absurd.
\end{example}

\section{\large Some applications to operator semigroups}

\hspace{4mm}
Under a {\em semigroup} $(T_s)_{s\ge 0}$ on a vector space $X$ we understand  
a family of operators on $X$ satisfying $T_{s+t}=T_sT_t$ for all $s,t\in\mathbb{R}_+$, and $T_0=I_X$, where $I_X$ is the identity operator on $X$.
We shall also use the notation $(T_s)_{s\in J}$ for a one-parameter operator semigroup on $X$, 
where $J=\mathbb{R}_+, \mathbb{R}, \mathbb{Z}_+$, or $\mathbb{Z}$. Such a semigroup is always commutative. 
More generally, any commutative operator semigroup $\mathcal G$ on $X$ can be written
as $\mathcal G=(T_S)_{S\in J}$, where $T_S=S$ and $J=\mathcal G$ is directed by 
a partial order $RS\succeq S$ for all $R,S\in\mathcal G$.
The partial order $\succeq$ is extendable to the commutative operator semigroup
${\co}(\mathcal G)$ via letting $RS\succeq S$ for all $R,S\in{\co}(\mathcal G)$ 
(cf., \cite[p.75]{K1985}). 

\begin{proposition}\label{prop A-1}
Let $\mathcal G$ be a collectively order to norm bounded
commutative operator semigroup on an ordered Banach space $X$ with a closed generating cone containing a \text{\rm (}weakly\text{\rm )}
compact operator $S_0$. Then the semigroup $(S)_{{\co}(\mathcal G)\ni S\succeq S_0}$ 
is collectively \text{\rm (}weakly\text{\rm )} compact. In particular, $\{S\}_{S\succeq S_0}$
is collectively \text{\rm (}weakly\text{\rm )} compact.
\end{proposition}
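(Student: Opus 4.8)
The plan is to reduce the whole statement to the single operator $S_0$ by exploiting commutativity, and the first thing I would pin down is the structural meaning of the order $\succeq$. Since $\co(\mathcal G)$ is itself a commutative operator semigroup (the convex hull of $\mathcal G$ is closed under composition, because $\big(\sum_i\lambda_iT_i\big)\big(\sum_j\mu_jS_j\big)=\sum_{i,j}\lambda_i\mu_jT_iS_j$ is again a convex combination of elements of $\mathcal G$) and $\succeq$ is the reflexive--transitive closure of the relations $RS\succeq S$, any $S\in\co(\mathcal G)$ with $S\succeq S_0$ is reached by a finite chain and hence has the form $S=R_m\cdots R_1S_0=RS_0$ with $R\in\co(\mathcal G)$, or else $S=S_0$. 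Thus the family whose collective (weak) compactness I must prove is contained in $\{RS_0:R\in\co(\mathcal G)\}\cup\{S_0\}$.

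Next I would extract a uniform norm bound. The hypothesis gives $\mathcal G\in\text{\bf L}_{onb}(X,X)$, and since $X$ is an ordered Banach space with a closed generating cone, Theorem~\ref{appl1} applies and yields $C_0:=\sup_{T\in\mathcal G}\|T\|<\infty$. This bound passes to the convex hull, since $\|Rx\|\le\sum_i\lambda_i\|T_ix\|\le C_0\|x\|$ for $R=\sum_i\lambda_iT_i\in\co(\mathcal G)$; setting $C:=\max\{1,C_0\}$ I get $\|R\|\le C$, i.e.\ $R(B_X)\subseteq CB_X$, for every $R\in\co(\mathcal G)$.

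The decisive step is to push $S_0$ to the outside using commutativity. For $S\succeq S_0$ I write $S=RS_0=S_0R$, whence
$$
 S(B_X)=S_0\big(R(B_X)\big)\subseteq S_0(CB_X)=C\cdot S_0(B_X),
$$
and the case $S=S_0$ is subsumed since $C\ge1$. Therefore $\bigcup_{\co(\mathcal G)\ni S\succeq S_0}S(B_X)\subseteq C\cdot S_0(B_X)$. As $S_0$ is (weakly) compact, $S_0(B_X)$ is relatively (weakly) compact, hence so is the scalar multiple $C\cdot S_0(B_X)$ and every subset thereof; this is exactly collective (weak) compactness of $(S)_{\co(\mathcal G)\ni S\succeq S_0}$. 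The final ``in particular'' is immediate, since $\{S\}_{S\succeq S_0}$ with $S\in\mathcal G\subseteq\co(\mathcal G)$ is a subfamily of a collectively (weakly) compact one.

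I expect the main obstacle to be conceptual rather than computational. The naive reading $S(B_X)=R\big(S_0(B_X)\big)$ applies the merely norm-bounded family $\{R\}$ to the compact set $S_0(B_X)$, and the union of such images is generally \emph{not} relatively compact (a bounded sequence of operators can spread a fixed compact set across non-compactly many directions). Commutativity is precisely what allows interchanging the two factors so that the single compact operator $S_0$ acts last on a bounded set, an operation that does preserve relative (weak) compactness. The remaining points---the order characterization $S\succeq S_0\Rightarrow S=RS_0$ and the stability of the norm bound under convex hulls---are routine and can be dispatched as above.
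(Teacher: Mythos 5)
Your proof is correct and follows essentially the same route as the paper's: Theorem~\ref{appl1} gives the uniform bound $\sup_{S\in\co(\mathcal G)}\|S\|<\infty$, and commutativity lets you write $S=S_0R$ so that $\bigcup_{\co(\mathcal G)\ni S\succeq S_0}SB_X\subseteq S_0(CB_X)$, which is relatively (weakly) compact. If anything, you are slightly more careful than the paper, which asserts the equality $\bigcup_{S\succeq S_0}SB_X=S_0\bigl(\bigcup_{S\in\co(\mathcal G)}SB_X\bigr)$ where only the inclusion is needed and leaves the factorization $S\succeq S_0\Rightarrow S=RS_0=S_0R$ and the edge case $S=S_0$ implicit.
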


\begin{proof}
By Theorem \ref{appl1}, $\|S\|\leq M<\infty$ for $S\in\mathcal G$, and hence
for all $S\in{\co}(\mathcal G)$. Thus, 
$$
   \bigcup_{{\co}(\mathcal G)\ni S\succeq S_0}SB_X=
   S_0\Big(\bigcup_{S\in{\co}(\mathcal G)}SB_X\Big)\subseteq S_0(MB_X)=MS_0B_X.
$$
Since $S_0B_X$ is relatively (weakly) compact, 
$(S)_{{\co}(\mathcal G)\ni S\succeq S_0}$ is collectively (weakly) compact. 
\end{proof}

\begin{theorem}\label{prop A-3}
Let $\mathcal G$ be a collectively almost order bounded commutative operator semigroup on an ordered Banach space $X$ 
with a closed generating cone, and assume that every order interval in $X$ is weakly compact. 
Then the net $(S)_{S\in\overline{{\co}}(\mathcal G)}$ converges strongly to a projection.
\end{theorem}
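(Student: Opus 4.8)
The plan is to reduce the statement to a mean ergodic theorem for the uniformly bounded commutative semigroup $\co(\mathcal G)$ acting with relatively weakly compact orbits. First I would record that $\mathcal G$ is norm bounded: collective almost order boundedness forces $\mathcal G\in\text{\bf L}_{onb}(X,X)$, because the order interval $[u,v]$ appearing in Definition \ref{COCS and co}$(d)$ is weakly compact, hence norm bounded, so that $\mathcal G[a,b]\subseteq[u,v]+\varepsilon B_X$ is norm bounded. Theorem \ref{appl1} then yields a uniform bound $\sup_{S\in\mathcal G}\|S\|\le M$, and the same $M$ governs $\co(\mathcal G)$ and its closure $\overline{\co}(\mathcal G)$. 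Next I would show that every orbit $\co(\mathcal G)x$ is relatively weakly compact. Writing $x\in[-w,w]$ (possible since the cone generates) and using almost order boundedness, for each $\varepsilon>0$ the set $\mathcal Gx\subseteq\mathcal G[-w,w]$ lies in $[u,v]+\varepsilon B_X$ with $[u,v]$ weakly compact; the Grothendieck-type criterion (a bounded set lying, for every $\varepsilon$, within $\varepsilon B_X$ of a weakly compact set is relatively weakly compact) gives relative weak compactness of $\mathcal Gx$, and the Krein--Smulian theorem propagates this to the convex orbit $\co(\mathcal G)x$.

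With these two facts the engine is the standard ergodic splitting. Let $F=\{x: Sx=x\ \forall S\in\mathcal G\}$ be the common fixed space, on which the net is constant, and let $N$ be the norm closure of $\operatorname{span}\{y-Ty: y\in X,\ T\in\mathcal G\}$. On the algebraic generators I would exploit the division order $\succeq$: for $T\in\mathcal G$ the averaging elements $A_n=\frac1n\sum_{k=1}^n T^k$ lie in $\co(\mathcal G)$ and telescope, so that $\|A_n(y-Ty)\|=\frac1n\|Ty-T^{n+1}y\|\le 2M\|y\|/n$ (all powers $T^k\in\mathcal G$ have norm $\le M$). Since every $S\succeq A_n$ has the form $S=RA_n$ with $R\in\overline{\co}(\mathcal G)$, $\|R\|\le M$, we get $\|S(y-Ty)\|\le 2M^2\|y\|/n$, whence $Sx\to 0$ strongly along $\succeq$ for each $x\in N$. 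Thus the net converges to $x$ on $F$ and to $0$ on $N$, which already forces $F\cap N=\{0\}$.

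The heart of the argument is the decomposition $X=F\oplus N$, and this is where I expect the main difficulty. For fixed $x$ the set $K_x=\overline{\co}^{\,w}\{Sx:S\in\co(\mathcal G)\}$ is weakly compact (step two), convex, and invariant under the commuting weakly continuous affine maps $T|_{K_x}$, so the Markov--Kakutani fixed point theorem supplies $p\in K_x\cap F$. As convex combinations of orbit points are again orbit points, $p$ is a weak limit of some net $S_\gamma x$, whence $S_\gamma(x-p)=S_\gamma x-p\to 0$ weakly and $0$ is a weak cluster point of the orbit of $x-p$. A Hahn--Banach separation then shows $x-p\in N$: any $f\in X^{*}$ annihilating $N$ is $\mathcal G$-invariant, so $f(S(x-p))=f(x-p)$ is constant along the orbit and cannot have $0$ as a cluster value unless $f(x-p)=0$. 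Hence $x=p+(x-p)\in F\oplus N$.

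Finally I would assemble the conclusion. Defining $P$ to be the identity on $F$ and $0$ on $N$ produces a bounded idempotent with $\|P\|\le M$, and by the two convergence computations $Sx\to Px$ strongly for every $x$; this is exactly strong convergence of the net $(S)_{S\in\overline{\co}(\mathcal G)}$ to the projection $P$. The decisive uses of the hypotheses are the weak compactness of order intervals (to obtain relatively weakly compact orbits and to run Markov--Kakutani) and the division order $\succeq$ together with uniform boundedness (to upgrade the telescoping estimate from the averaging sequence $A_n$ to the whole net). The delicate point I anticipate is precisely that these ingredients must cooperate to pin down a single strong limit rather than merely a family of weak cluster points.
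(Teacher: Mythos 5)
Your proof is correct, and its first half coincides with the paper's: you pass from collective almost order boundedness to collective order to norm boundedness (the paper does this via Grothendieck's characterization of relative weak compactness, you via the norm boundedness of the weakly compact interval $[u,v]$ --- both work), invoke Theorem~\ref{appl1} for the uniform bound $\sup_{S\in\mathcal G}\|S\|\le M$, and obtain relatively weakly compact convex orbits from \cite[Theorem 3.44]{AB2006} together with the Krein--Smulian theorem. Where you genuinely diverge is the engine: the paper, having produced a weak cluster point $y$ of $(Sx)_{S\in\overline{\co}(\mathcal G)}$, simply cites the Eberlein theorem \cite[Theorem 1.5]{K1985} --- the net $(S)_{S\in\overline{\co}(\mathcal G)}$ being an ergodic net in Krengel's sense --- to upgrade the weak cluster point to a norm limit. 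You instead re-derive this abstract mean ergodic theorem from scratch: the splitting $X=F\oplus N$ with $N=\overline{\operatorname{span}}\{y-Ty\}$, the telescoping estimate $\|A_n(y-Ty)\|\le 2M\|y\|/n$ propagated along the division order via the factorization $S=RA_n$, Markov--Kakutani on the weakly compact convex orbit closure $K_x$ to manufacture the fixed point $p\in F$, and Hahn--Banach separation to place $x-p$ in $N$. The paper's citation buys brevity; your route buys self-containedness and an explicit description of the limit as the projection onto $F$ along $N$ with $\|P\|\le M$. Two points you glossed over, both routine: upgrading $S(y-Ty)\to 0$ from a single generator to finite linear combinations requires transitivity and directedness of $\succeq$ on $\overline{\co}(\mathcal G)$, which rests on $\overline{\co}(\mathcal G)$ being closed under multiplication (convex combinations multiply out into $\co(\mathcal G)$, and joint strong continuity of multiplication on the $M$-ball handles the closure); and the passage from the span to its closure $N$ uses the uniform bound $M$ in a standard $\varepsilon/3$ argument.
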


\begin{proof} 
Since order intervals in $X$ are weakly compact then every almost order bounded subset of $X$ is 
relatively weakly compact due to the Grothendieck result (cf., \cite[Theorem 3.44]{AB2006}).
In particular, $\mathcal G$ be a collectively order to norm bounded.
By Theorem \ref{appl1}, $\|S\|\leq M$ for some $M<\infty$ and all $S\in\mathcal G$. 

Let $x\in X_+$. Then, the set $\bigcup_{S\in\mathcal G}S[-x,x]$ is almost order bounded, and hence relatively weakly compact.
Since $(Sx)_{S\in\overline{{\co}}(\mathcal G)}$ lies in the weakly compact set $\overline{\co}(\bigcup_{S\in\mathcal G}S[-x,x])$, 
it has a weak cluster point, say $y$. Then $\lim\limits_{S\to\infty}\|Sx-y\|=0$ 
by the Eberlein theorem (cf., \cite[Theorem 1.5]{K1985}). Since $x\in X_+$ is arbitrary and $X_+$ is generating, 
$(S)_{S\in\overline{{\co}}(\mathcal G)}$ converges strongly to a projection.
\end{proof}

\begin{corollary}\label{prop A-3 + normal}
Let $(T_s)_{s\in J}$ be either a $C_0$-semigroup or  one-parameter discrete operator semigroup 
on an order continuous Banach lattice $E$. If $(T_s)_{s\in J}$ is collectively almost order bounded 
then $(T_s)_{s\in J}$ is mean ergodic. 
\end{corollary}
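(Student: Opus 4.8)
The plan is to verify that the hypotheses of Theorem~\ref{prop A-3} are met and then to convert its conclusion --- strong convergence of the net $(S)_{S\in\overline{\co}(\mathcal G)}$ to a projection --- into the classical statement of mean ergodicity, namely convergence of the Cesàro averages. First I would record two structural facts about an order continuous Banach lattice $E$. Its positive cone is automatically closed and generating, since $x=x^+-x^-$ and $E_+$ is norm closed. Moreover, because $E$ has order continuous norm, $E$ is an ideal in its bidual, so each order interval $[0,x]_E$ coincides with the weak$^*$-compact interval $[0,x]_{E''}$ and is therefore weakly compact in $E$ (cf.\ \cite{AB2006}). Thus every order interval of $E$ is weakly compact, and the collectively almost order bounded semigroup $(T_s)_{s\in J}$ falls exactly under the hypotheses of Theorem~\ref{prop A-3}; since a lattice cone is normal, Proposition~\ref{normal} gives ${\cal G}\in\text{\bf L}_{onb}$ and then Theorem~\ref{appl1} applies as well.

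Next I would extract what these results give. Theorem~\ref{appl1} yields a uniform bound $\|T_s\|\le M$, so the Cesàro averages $A_t$ --- the strong integrals $\frac1t\int_0^tT_s\,ds$ in the $C_0$ case and the sums $\frac1n\sum_{k=0}^{n-1}T_1^{\,k}$ in the discrete case --- are uniformly bounded. As in the proof of Theorem~\ref{prop A-3}, for $x\in E_+$ the set $\bigcup_s T_s[0,x]$ is almost order bounded, hence relatively weakly compact by Grothendieck's theorem; splitting a general $x$ as $x^+-x^-$ shows that every orbit $\{T_sx\}$ is relatively weakly compact, and therefore so is each averaging family $\{A_tx\}\subseteq\overline{\co}\{T_sx\}$ by the Krein--\v{S}mulian theorem.

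The decisive step is to run the mean ergodic decomposition. Let $P$ be the projection furnished by Theorem~\ref{prop A-3}; from its construction $\operatorname{ran}P=\mathrm{Fix}=\{x:T_sx=x\ \forall s\}$ and $SP=PS=P$ for all $S\in\overline{\co}(\mathcal G)$. I would show $A_tx\to Px$ for every $x$. On $\mathrm{Fix}$ one has $A_t=I=P$. For the complementary part write $w=x-Px$, so $Pw=0$; using the asymptotic invariance $\|A_tw-T_rA_tw\|\to 0$ (a telescoping estimate controlled by $M/t$, resp.\ its integral analogue) together with the relative weak compactness of $\{A_tw\}$, any weak cluster point of $A_tw$ is fixed, lies in $\overline{\co}\{T_sw\}$, and is annihilated by $P$, hence equals $0$; thus $A_tw\rightharpoonup 0$. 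Because $w-A_tw$ always lies in the range of the relevant difference operator, weak convergence forces $w\in\ker P=\overline{\operatorname{ran}(I-T_1)}$ (resp.\ $\overline{\operatorname{ran}A}$, with $A$ the generator), on which the averages tend to $0$ in norm by the bound $\|A_t\|\le M$ and a telescoping, resp.\ primitive, computation. Hence $A_tx=Px+A_tw\to Px$ in norm, i.e.\ $(T_s)_{s\in J}$ is mean ergodic.

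The main obstacle I anticipate is precisely this last identification: Theorem~\ref{prop A-3} delivers convergence of the net indexed by $\overline{\co}(\mathcal G)$ under the domination order $\succeq$, whereas mean ergodicity is convergence of the specific Cesàro net, and the averages $A_t$ need not be cofinal in $(\overline{\co}(\mathcal G),\succeq)$. Bridging the two requires the weak-compactness-driven decomposition above rather than a direct cofinality argument. The continuous-parameter case additionally needs care that the strong integrals $A_tx$ are well defined and that the invariance and telescoping estimates go through via the generator; I would treat the $C_0$ and discrete cases in parallel, the only difference being sums versus integrals.
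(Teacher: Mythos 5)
Your proposal is correct, and its skeleton coincides with the paper's: the corollary is stated there as an immediate consequence of Theorem~\ref{prop A-3}, the only point needing verification being that an order continuous Banach lattice has a closed generating (indeed normal) cone and weakly compact order intervals --- exactly the facts you record, with the weak compactness of intervals being the standard characterization of order continuity of the norm (cf.\ \cite{AB2006}). Where you genuinely diverge is in the step you correctly flag as non-immediate: passing from strong convergence of the net $(S)_{S\in\overline{\co}(\mathcal G)}$ under $\succeq$ to convergence of the specific Ces\`aro net $A_t$. The paper absorbs this into its citation of Eberlein's theorem \cite[Theorem 1.5]{K1985} inside the proof of Theorem~\ref{prop A-3}: in Krengel's formulation that theorem is stated for \emph{ergodic nets}, of which the Ces\`aro averages are the prototype, so once each orbit $(Sx)$ has a weak cluster point the strong convergence $A_tx\to Px$ comes for free and no cofinality of the averages in $(\overline{\co}(\mathcal G),\succeq)$ is needed. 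You instead reconstruct this machinery by hand via the classical Yosida-type splitting $E=\mathrm{Fix}\oplus\overline{\operatorname{ran}(I-T_1)}$ (resp.\ the closure of the generator's range), using $PA_t=A_tP=P$, the asymptotic invariance estimate $\|A_t-T_rA_t\|=O(r/t)$, relative weak compactness of $\{A_tw\}$, and weak closedness of the closed subspace containing $w-A_tw$; each of these steps is sound (in particular $PS'=S'P=P$ for $S'\in\overline{\co}(\mathcal G)$ follows from $SS'\succeq S_1$ whenever $S\succeq S_1$, and $A_t\in\overline{\co}(\mathcal G)$ via Riemann sums in the strong operator topology). The trade-off: the paper's route is shorter but leans on the ergodic-nets formalism of \cite{K1985}, while yours is self-contained and makes explicit precisely why the corollary is not a formal consequence of net convergence alone --- a gap a careless reader of Theorem~\ref{prop A-3} could easily miss.
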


\noindent
The following example shows that the order continuity of the norm in $E$ is essential in Corollary \ref{prop A-3 + normal}.

\begin{example}
{\em (cf., \cite[Exercise 2.2.22]{E2007})}
Consider the operator $T:c\to c$ such that $T(a_1,a_2,\ldots a_k,\ldots)=\big(\frac{1}{2}a_1,\frac{2}{3}a_2,\ldots,\frac{k}{k+1}a_k,\ldots\big)$.
Clearly, $\big\{T^n\big\}_{n=1}^\infty\in\text{\bf L}_{ob}(c)$ for some $\varepsilon>0$. However,
the Ces\`aro averages $\frac{1}{n} \sum_{j=0}^{n-1}T^j$ do not converge in the norm at $(1,1,\ldots 1,\ldots)\in c$.
\end{example}

\noindent
The following corollary of Theorem \ref{prop A-3} is immediate, because order intervals 
in the predual of a Neumann algebra are weakly compact by the Akemann theorem (cf., \cite[Theorem 3.3.3]{E2007}).

\begin{corollary}\label{prop A-4 + normal}
Let $(T_s)_{s\in J}$ be either a $C_0$-semigroup or  one-parameter discrete operator semigroup 
on the predual $\mathcal M_*$ of a von Neumann algebra $\mathcal M$.
If $(T_s)_{s\in J}$ is collectively almost order bounded then $(T_s)_{s\in J}$ is mean ergodic. 
\end{corollary}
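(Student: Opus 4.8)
The plan is to obtain this corollary as a direct specialisation of Theorem~\ref{prop A-3}: the entire task is to check that $X=\mathcal M_*$ meets the structural hypotheses of that theorem, the single new ingredient being the weak compactness of order intervals. First I would record the order structure of the predual. Working with the self-adjoint part of $\mathcal M_*$, the positive cone consists of the normal positive functionals; it is norm closed because positivity passes to norm limits, and it is generating because the Jordan decomposition $\varphi=\varphi^+-\varphi^-$ of a self-adjoint normal functional exhibits every element as a difference of two positive ones. Hence $\mathcal M_*$ is an ordered Banach space with a closed generating cone, exactly as Theorem~\ref{prop A-3} requires.

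The one genuinely external input is that every order interval $[-\varphi,\varphi]\subseteq\mathcal M_*$ is weakly compact. This is precisely the content of Akemann's theorem, quoted here as \cite[Theorem 3.3.3]{E2007}, so this step reduces to the citation and plays exactly the role that weak compactness of order intervals in an order continuous Banach lattice played in Corollary~\ref{prop A-3 + normal}. The remaining hypotheses are free: a one-parameter semigroup, whether a $C_0$-semigroup or a discrete one, is automatically commutative, as noted in the preamble to this section, and collective almost order boundedness of $(T_s)_{s\in J}$ is assumed. With all hypotheses of Theorem~\ref{prop A-3} verified, I would invoke it to conclude that the directed net $(S)_{S\in\overline{\co}(\mathcal G)}$ converges strongly to a projection $P$; note that boundedness $\|S\|\le M$ of the semigroup is already built into that theorem through Theorem~\ref{appl1}.

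It remains to read this off as mean ergodicity in the usual sense. The point is that the conclusion of Theorem~\ref{prop A-3}, proved through the Eberlein theorem and yielding $\lim_{S\to\infty}\|Sx-Px\|=0$ for every $x$, is precisely the strong convergence of the averaging net attached to $(T_s)_{s\in J}$: the Cesàro averages $\frac1n\sum_{k=0}^{n-1}T^k$ in the discrete case and the time averages $\frac1t\int_0^tT_s\,ds$ in the $C_0$ case all lie in $\co(\mathcal G)$, and by the abstract mean ergodic theorem for bounded commutative semigroups (cf.\ \cite{K1985}) their strong limit coincides with the limit projection $P$ furnished above. Thus $(T_s)_{s\in J}$ is mean ergodic.

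I expect the delicate point to be not the hypothesis-checking, which is routine once Akemann's theorem is cited, but this final identification: one must be sure that strong convergence of the abstract net over $\overline{\co}(\mathcal G)$ genuinely delivers convergence of the concrete Cesàro or time averages to the same projection. I would dispatch this by appeal to the standard abstract mean ergodic theory of \cite{K1985}, which ties the fixed-space projection to the strong limit of the convex-hull net, rather than by re-establishing the relation between the two families of operators by hand.
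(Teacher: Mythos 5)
Your proposal matches the paper's proof exactly: the paper derives this corollary as ``immediate'' from Theorem~\ref{prop A-3}, with the weak compactness of order intervals in $\mathcal M_*$ supplied by Akemann's theorem (cf.\ \cite[Theorem 3.3.3]{E2007}), which is precisely your route. Your additional verifications (closedness and generation of the cone via Jordan decomposition, and the identification of the strong limit of the net over $\overline{\co}(\mathcal G)$ with convergence of the Ces\`aro or time averages via the abstract mean ergodic theory of \cite{K1985}) are just the routine details the paper leaves implicit, and they are correct.
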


\medskip
Uniformly continuous semigroups is a starting point in the theory of $C_0$-semigroups. 
Here we discuss similar elementary notions concerning semigroups on ordered vector spaces.

\begin{definition}\label{locally qualified semigrops}
A semigroup $(T_s)_{s\ge 0}$ on an ordered vector space $X$ is 
\begin{enumerate}[$a)$]
\item 
{\em locally collectively order bounded} $($shortly, {\em \text{\rm lco}-bounded}$)$\\ 
whenever $\big\{T_s\big\}_{0\le s\le M}\in\text{\bf L}_{ob}(X)$ for every $M>0$.
\item 
{\em \text{\rm lco}-continuous} whenever $\big\{T_s\big\}_{0\le s\le M}\in\text{\bf L}_{oc}(X)$ for every $M>0$.
\item 
{\em \text{\rm lcru}-continuous} whenever $\big\{T_s\big\}_{0\le s\le M}\in\text{\bf L}_{ruc}(X)$ for every $M>0$.
\end{enumerate}
A semigroup $(T_s)_{s\ge 0}$ on a normed ordered vector space $X$ is 
\begin{enumerate}
\item[$d)$] 
{\em \text{\rm lcon}-bounded} if $\big\{T_s\big\}_{0\le s\le M}\in\text{\bf L}_{onb}(X)$ for every $M>0$.
\end{enumerate}
\end{definition}

\medskip
\noindent
In several important cases, the condition ``for every $M>0$" can be replaced 
by ``for some $\varepsilon>0$" in each of the four items of Definition \ref{locally qualified semigrops}.

\begin{proposition}\label{obz-lemma}
Let ${\cal T}=(T_s)_{s\ge 0}$ be a semigroup on an ordered vector space $X$ such that $X_+-X_+=X$. 
\begin{enumerate}[$i)$]
\item 
If $\big\{T_s\big\}_{0\le s\le\varepsilon}\in\text{\bf L}_{ob}(X)$ for some $\varepsilon>0$, then ${\cal T}$ is \text{\rm lco}-bounded.
\item 
If $\big\{T_s\big\}_{0\le s\le\varepsilon}\in\text{\bf L}_{ruc}(X)$ for some $\varepsilon>0$, 
then ${\cal T}$ is \text{\rm lcru}-continuous.
\item 
If $\big\{T_s\big\}_{0\le s\le\varepsilon}\in\text{\bf L}_{oc}(X)$ for some $\varepsilon>0$, then  ${\cal T}$ is \text{\rm lco}-continuous.
\item 
If $X$ is Banach space, $X_+$ is closed, and $\big\{T_s\big\}_{0\le s\le\varepsilon}\in\text{\bf L}_{onb}(X)$ for some $\varepsilon>0$, then
${\cal T}$ is \text{\rm lcon}-bounded.
\end{enumerate}
\end{proposition}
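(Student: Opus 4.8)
The plan is to reduce every slice $[0,M]$ to the given slice $[0,\varepsilon]$ through the semigroup law. Fix $M>0$ and pick $N\in\mathbb{N}$ with $N\varepsilon\ge M$; since $T_{s+t}=T_sT_t$, every $T_s$ with $s\in[0,M]$ equals $(T_{s/N})^N$ with $s/N\in[0,\varepsilon]$, so $\{T_s\}_{0\le s\le M}$ is built from $N$-fold compositions of operators in ${\cal S}:=\{T_r\}_{0\le r\le\varepsilon}$. For $i)$ this reduction is immediate: given an order interval $[a,b]=:[a_0,b_0]$, collective order boundedness of ${\cal S}$ furnishes order intervals $[a_k,b_k]$ for $k=1,\dots,N$ with ${\cal S}[a_{k-1},b_{k-1}]\subseteq[a_k,b_k]$, and these depend only on $[a,b]$ and $k$. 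Chaining them along $(T_{s/N})^N$ gives $T_s[a,b]\subseteq[a_N,b_N]$ uniformly in $s\in[0,M]$, so $\{T_s\}_{0\le s\le M}\in\text{\bf L}_{ob}(X)$; no positivity or Archimedean hypothesis enters here.

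Parts $ii)$ and $iii)$ rest on one transfer lemma, to be proved once for $\convcr 0$ and once for $\convco 0$: if ${\cal A}$ is collectively \text{\rm ru}- (resp. \text{\rm o}-) continuous and a family $\{(y^b_\alpha)_{\alpha\in A}\}_{b\in B}$ collectively \text{\rm ru}- (resp. \text{\rm o}-) converges to $0$, then so does $\{(Ay^b_\alpha)\}_{A\in{\cal A},\,b\in B}$. Granting it, I would iterate $N$ times along the families ${\cal F}_k=\{(T_{t_1}\cdots T_{t_k}x_\alpha)_\alpha:t_1,\dots,t_k\in[0,\varepsilon]\}$, starting from ${\cal F}_0=\{(x_\alpha)\}$ for an arbitrary $x_\alpha\convr 0$ (resp. $x_\alpha\convo 0$); since ${\cal F}_{k+1}$ is obtained by applying ${\cal S}$ to ${\cal F}_k$, the lemma yields ${\cal F}_N\convcr 0$ (resp. $\convco 0$). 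As $\{(T_sx_\alpha)\}_{0\le s\le M}\subseteq{\cal F}_N$ (take all $t_i=s/N$), the subfamily collectively converges as well, which is lcru- (resp. lco-) continuity.

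The lemma is the main obstacle, because the $T_s$ are not assumed positive and so the pointwise bound $-g_\beta\le y^b_\alpha\le g_\beta$ cannot be pushed through $A$. I would avoid positivity by repackaging the whole family as a single net: index by $\Gamma=A\times B$ ordered by the first coordinate alone, so that $(\alpha,b)\ge(\alpha',b')$ iff $\alpha\ge\alpha'$, and put $z_{(\alpha,b)}=y^b_\alpha$. The common witness $g_\beta\downarrow 0$ of $\{(y^b_\alpha)\}_b\convco 0$ shows at once that $z_\gamma\convo 0$, so collective o-continuity of ${\cal A}$ applies to the single net $(z_\gamma)$ and returns one witness $h_\delta\downarrow 0$ together with a threshold $(\alpha_\delta,b_\delta)\in\Gamma$. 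Because the order ignores the second coordinate, $\alpha\ge\alpha_\delta$ already forces $\pm Ay^b_\alpha\le h_\delta$ simultaneously for every $A\in{\cal A}$ and every $b\in B$, which is precisely $\{(Ay^b_\alpha)\}\convco 0$. The \text{\rm ru}-case is verbatim the same, with the element $u\in X_+$ and the rate sequence $(\alpha_n)$ playing the role of $g_\beta\downarrow 0$.

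Part $iv)$ follows from the norm theory already at hand. Since ${\cal S}\in\text{\bf L}_{onb}(X)$ and $X$ is an ordered Banach space with closed generating cone, Theorem \ref{appl1} gives $C:=\sup_{0\le r\le\varepsilon}\|T_r\|<\infty$, whence $\sup_{0\le s\le M}\|T_s\|=\sup_{0\le s\le M}\|(T_{s/N})^N\|\le\max(1,C)^N<\infty$. For an order interval $[a,b]$ I write $s=r+t$ with $t\in[0,\varepsilon]$ and $r\in[0,M]$; then $T_t[a,b]\subseteq{\cal S}[a,b]$ is norm bounded and the bounded operator $T_r=T_{s-t}$ preserves norm boundedness, uniformly in $s$. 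Hence $\{T_s\}_{0\le s\le M}[a,b]$ is norm bounded, i.e. ${\cal T}$ is lcon-bounded.
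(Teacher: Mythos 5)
Your proposal is correct, but it departs from the paper's proof at almost every step, generally to good effect, so a comparison is worthwhile. For $i)$ the paper first symmetrizes, using $X_+-X_+=X$ to put $[a,b]$ inside some $[-a_0,a_0]$, then peels off one factor of $T_\varepsilon$ at a time via $\bigcup_{\varepsilon\le s\le 2\varepsilon}T_s[-a_0,a_0]=T_\varepsilon\big(\bigcup_{0\le s\le\varepsilon}T_s[-a_0,a_0]\big)$ and absorbs the pieces into $\big[-\sum_{i=1}^n a_i,\sum_{i=1}^n a_i\big]$; your uniform factorization $T_s=(T_{s/N})^N$ with the chained intervals $[a_k,b_k]$ does the same job without the symmetrization, so your $i)$ consumes no hypothesis on the cone at all --- a genuine, if small, strengthening. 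For $iii)$ the paper runs the analogous induction but builds the collective witness by hand: it extracts nets $g^{(i)}_{\beta_{(i)}}\downarrow 0$ at each of the $n$ stages (restarting via order continuity of $T_\varepsilon$) and glues them into the single net $\sum_{i=1}^{n}g^{(i)}_{\beta_{(i)}}$ over the product of the index sets; your transfer lemma, obtained by re-indexing the whole family as one net over $A\times B$ directed by the first coordinate, achieves that uniformity in one stroke --- note only that this ordering is a directed preorder rather than a partial order, which is all a net requires (the paper's lexicographic device in the proof of Theorem \ref{o-cont are obdd} serves the same repackaging purpose there). For $ii)$ the paper does not argue directly at all: it deduces the claim from $i)$ by two applications of Theorem \ref{r-cont} (both legitimate since $X_+-X_+=X$), whereas your ru-version of the transfer lemma gives a self-contained proof independent of Theorem \ref{r-cont}. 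For $iv)$ the paper derives only the uniform bound $\sup_{0\le s\le M}\|T_s\|\le C^n$ from Theorem \ref{appl1} (its citation of Theorem \ref{o-cont are obdd} at that point is evidently a slip) and then asserts the conclusion; since order intervals need not be norm bounded when $X_+$ is not normal, the operator-norm bound alone does not yield $\{T_s\}_{0\le s\le M}\in\text{\bf L}_{onb}(X)$, and your extra chaining step $T_s[a,b]=T_r\big(T_t[a,b]\big)\subseteq T_r\big({\cal S}[a,b]\big)$ with $t\in[0,\varepsilon]$ is exactly what is needed, so on this item your argument is the more complete of the two. In sum: same inductive skeleton as the paper, but a different decomposition of $T_s$, one reusable collective-transfer lemma in place of ad hoc witness constructions, fewer hypotheses used in $i)$ and $ii)$, and a repaired final step in $iv)$.
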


\begin{proof}
$i)$\
Let $\big\{T_s\big\}_{0\le s\le\varepsilon}\in\text{\bf L}_{ob}(X)$ for some $\varepsilon>0$
and fix $M\ge 0$. Let $n\varepsilon\ge M$, and take arbitrary $a\le b$ in $X$. 
It follows from $X_+-X_+=X$ that $[a,b]\subseteq[-a_0,a_0]$ for some $a_0\in X_+$.
Since $\big\{T_s\big\}_{0\le s\le\varepsilon}\in\text{\bf L}_{ob}(X)$ and $X_+$ is generating, 
$\bigcup\limits_{0\le s\le\varepsilon}{T_s}[-a_0,a_0]\subseteq[-a_1,a_1]$ for some $a_1\in X$.
Similarly, for some $a_2\in X$, we have 
$\bigcup\limits_{0\le s\le\varepsilon}{T_s}[-a_1,a_1]\subseteq[-a_2,a_2]$ and
$$
   \bigcup\limits_{\varepsilon\le s\le 2\varepsilon}{T_s}[-a_0,a_0]=
   T_\varepsilon\left(\bigcup\limits_{0\le s\le\varepsilon}{T_s}[-a_0,a_0]\right)\subseteq
   T_\varepsilon[-a_1,a_1]\subseteq[-a_2,a_2].
$$
It follows $\bigcup\limits_{0\le s\le 2\varepsilon}{T_s}[-a_0,a_0]\subseteq[-(a_1+a_2),a_1+a_2]$.
Repeating n-times the argument, we obtain $a_1,a_2,...,a_n\in X$ with 
$$
   \bigcup\limits_{0\le s\le M}T_s[a,b]\subseteq 
   \bigcup\limits_{0\le s\le n\varepsilon}{T_s}[-a_0,a_0]\subseteq
   \left[-\sum_{i=1}^{n}a_i,\sum_{i=1}^{n}a_i\right].
   \eqno(1)
$$
Since $[a,b]$ is arbitrary, (1) implies $\big\{T_s\big\}_{0\le s\le M}\in\text{\bf L}_{ob}(X)$, and since $M\ge 0$ is arbitrary, 
$(T_s)_{s\ge 0}$ is locally collectively order bounded.

\medskip
$ii)$\
By Theorem \ref{r-cont}, it follows from $i)$.

\medskip
$iii)$\
Suppose $\big\{T_s\big\}_{0\le s\le\varepsilon}\in\text{\bf L}_{oc}(X)$ for some $\varepsilon>0$,
and fix $M\ge 0$. Let $n\varepsilon\ge M$ and $x_\alpha\convo 0$ in $X$.
Pick a net $g^{(1)}_{{\beta_{(1)}}}\downarrow 0$ in $X$ such that, 
for each $\beta_{(1)}$ there is an $\alpha_{\beta_{(1)}}$ with 
$\pm T_sx_\alpha\le g^{(1)}_{{\beta_{(1)}}}$ for $0\le s\le\varepsilon$ and
$\alpha\ge\alpha_{\beta_{(1)}}$. Since $T_\varepsilon$ is order continuous,
$T_\varepsilon x_\alpha\convo 0$ in $X$. It follows from 
$\big\{T_s\big\}_{0\le s\le\varepsilon}\in\text{\bf L}_{oc}(X)$ that 
there exists a net $g^{(2)}_{{\beta_{(2)}}}\downarrow 0$ in $X$ such that, 
for each $\beta_{(2)}$ there is $\alpha_{\beta_{(2)}}$ with 
$\pm T_sT_\varepsilon x_\alpha\le g^{(2)}_{{\beta_{(2)}}}$ for $0\le s\le\varepsilon$ and
$\alpha\ge\alpha_{\beta_{(2)}}$. Repeating n-times, we obtain nets 
$$
   g^{(1)}_{{\beta_{(1)}}}\downarrow 0,\  
   g^{(2)}_{{\beta_{(2)}}}\downarrow 0,\ . . . \
   g^{(n)}_{{\beta_{(n)}}}\downarrow 0
$$
so that, for each $\beta_{(i)}$, $i=1,2,...,n$, there is $\alpha_{\beta_{(i)}}$ satisfying 
$$
   \pm T_sT^{i-1}_\varepsilon x_\alpha\le g^{(i)}_{{\beta_{(i)}}} \ \ \ \ 
   (0\le s\le\varepsilon \ \ \text{\rm and} \ \ \alpha\ge\alpha_{\beta_{(i)}}).
$$
Consider the net 
$g_{(\beta_{(1)},\beta_{(2)},...,\beta_{(n)})}=\sum_{i=1}^{n}g^{(i)}_{{\beta_{(i)}}}\downarrow 0$. 
Take an 
$$
   \alpha_{(\beta_{(1)},\beta_{(2)},...,\beta_{(n)})}\ge\alpha_{\beta_{(1)}},
   \alpha_{\beta_{(2)}},....,\alpha_{\beta_{(n)}}. 
$$
Then, for each $\alpha\ge\alpha_{(\beta_{(1)},\beta_{(2)},...,\beta_{(n)})}$,
$$
   \pm T_sT^{i-1}_\varepsilon x_\alpha\le g^{(i)}_{{\beta_{(i)}}}\le
   g_{(\beta_{(1)},\beta_{(2)},...,\beta_{(n)})} \ \ \ \ 
   (0\le s\le\varepsilon \ \ \text{\rm and} \ \ i=1,2,...,n).
$$
Therefore, for each $\alpha\ge\alpha_{(\beta_{(1)},\beta_{(2)},...,\beta_{(n)})}$,
$$
   \pm T_sx_\alpha\le g_{(\beta_{(1)},\beta_{(2)},...,\beta_{(n)})} \ \ \ \ 
   (0\le s\le n\varepsilon).
   \eqno(2)
$$
Since $x_\alpha\convo 0$ is arbitrary, (2) implies 
$\big\{T_s\big\}_{0\le s\le n\varepsilon}\in\text{\bf L}_{oc}(X)$, and hence
$\big\{T_s\big\}_{0\le s\le M}\in\text{\bf L}_{oc}(X)$.
Since $M\ge 0$ is arbitrary, $(T_s)_{s\ge 0}$ is locally collectively order continuous.

\medskip
$iv)$\
Suppose $\big\{T_s\big\}_{0\le s\le\varepsilon}\in\text{\bf L}_{onb}(X)$ for some $\varepsilon>0$
and fix $M\ge 0$. Take $n\in\mathbb{N}$ with $n\varepsilon\ge M$. Let $a\in X_+$.
By Theorem \ref{o-cont are obdd}, there exists $C\ge 0$ such that $\|T_s\|\le C$ for all
$0\le s\le\varepsilon$. Then
$$
   \sup\limits_{0\le s\le M}\|T_sa\|\le\sup\limits_{0\le s\le n\varepsilon}\|T_s\|\|a\|\le
   \Big(\sup\limits_{0\le s\le\varepsilon}\|T_s\|\Big)^n\|a\|\le C^n\|a\|.
   \eqno(3)
$$
Since $a\in X_+$ is arbitrary, (3) implies $\big\{T_s\big\}_{0\le s\le M}\in\text{\bf L}_{onb}(X)$.
As $M\ge 0$ is arbitrary, $(T_s)_{s\ge 0}$ is locally collectively order to norm bounded.
\end{proof}

Under rather mild assumptions, every order bounded (\text{\rm o}-continu\-ous, \text{\rm ru}-contin\-uous, order to norm bounded) 
operator between ordered Banach spaces gives a rise to \text{\rm lco}-bounded  (resp., \text{\rm lco}-bounded, \text{\rm lco}-continuous, 
\text{\rm lcru}-continuous, \text{\rm lcon}-bounded) operator semigroup $(T_t)_{t\ge 0}$. 

\begin{example}\label{example-(r-1x)}
Let $S:X\to Y$ be a bounded operator between Banach spaces.
Let $Z=X\oplus Y$, define a bounded operator $A:Z\to Z$ by $A(x,y)=(0,Sx)$,
and set $T_t=e^{tA}=\|\cdot\|$-$\sum_{k=0}^{\infty}\frac{t^kA^k}{k!}$ for $t\ge 0$. 
Then $(T_t)_{t\ge 0}$ is a uniformly continuous semigroup on the Banach space $Z$ such that
$T_t=I+tA$ for $t\ge 0$ because $A^2=0$.

Suppose $X$ and $Y$ are ordered Banach spaces and $X_+-X_+=X$.
\begin{enumerate}[$(a)$]
\item 
Let $Y_+$ be normal and $S:X\to Y$ be order bounded. 
Then $S$ is order to norm bounded since $S$ is order bounded and $Y_+$ is normal.
Theorem \ref{appl1} implies that $S$, and hence $A$ is bounded. Since $A$ is order bounded,
it follows from $T_t=I+tA$ for $t\ge 0$ that $(T_t)_{t\ge 0}$ is \text{\rm lco}-bounded.
\item 
Let $Y_+$ be normal and $S:X\to Y$ be \text{\rm ru}-continuous. By Theorem \ref{r-cont}, $S$ is order bounded, and 
hence $S$ is order to norm bounded because $Y_+$ is normal. Due to Theorem \ref{appl1},
$S$ is bounded. Thus $A$ is bounded. Since $A$ is \text{\rm ru}-continuous,
$T_t=I+tA$ implies $(T_t)_{t\ge 0}$ is \text{\rm lcru}-continuous.
\item
Suppose additionally that $X$ is Archimedean. Let $Y_+$ be normal and $S:X\to Y$ be order continuous. 
By Theorem \ref{o-cont are obdd}, $S$ is order bounded.
Since $Y_+$ is normal, $S$ is order to norm bounded.
Theorem \ref{appl1} implies boundedness of $S$, and hence of $A$. Since $A$ is order continuous
and $T_t=I+tA$, we have $(T_t)_{t\ge 0}$ is  \text{\rm lco}-continuous.
\item 
Let $S:X\to Y$ be order to norm bounded. Then, by Theorem \ref{appl1}, $S$ is bounded, and so is $A$. 
Since $A$ is order to norm bounded, $T_t=I+tA$ implies  $(T_t)_{t\ge 0}$ is \text{\rm lcon}-bounded.
\end{enumerate}
\end{example}

In recent works of Gl\"{u}ck, Kandi\'{c}, Kaplin, and Kramar Fijav\v{z} \cite{GK2024,KK2020,KK-F2020} relatively uniformly continuous 
semigroups on vector lattices were deeply studied.
Using of collectively qualified semigroups is too rough for getting progress in this direction. 
We postpone the investigation of relatively continuous semigroups on ordered vector spaces to a forthcoming paper.

{\normalsize 
}
\end{document}